\newtheorem{theorem}{Theorem}[section]
\newtheorem{lemma}[theorem]{Lemma}
\newtheorem{conjecture}[theorem]{Conjecture}
\newtheorem{claim}{}[theorem]
\newcommand{\del}{\backslash}
\title{Graphs with girth $2\ell+1$ and without longer odd holes that contain an odd $K_4$-subdivision }
\author{Rong Chen, Yidong Zhou\\
\\
Center for Discrete Mathematics,\ \ Fuzhou University\\
Fuzhou,\ \ P. R. China}
\begin{document}

\maketitle

\footnote{Mathematics Subject Classification: 05C15, 05C17, 05C69.

Emails: rongchen13@163.com (R. Chen),\ \ zoed98@126.com (Y. Zhou).

This research was partially supported by grants from the National Natural Sciences Foundation of China (No. 11971111).
}

\begin{abstract}
We say that a graph $G$ has an {\em odd $K_4$-subdivision} if some subgraph of $G$ is isomorphic to a $K_4$-subdivision which if embedded in the plane the boundary of each of its faces has odd length and is an induced cycle of $G$. 
For a number $\ell\geq 2$, let $\mathcal{G}_{\ell}$ denote the family of graphs which have girth $2\ell+1$ and have no odd hole with length greater than $2\ell+1$. Wu, Xu and Xu conjectured that every graph in $\bigcup_{\ell\geq2}\mathcal{G}_{\ell}$ is 3-colorable.
Recently, Chudnovsky et al. and Wu et al., respectively, proved that every graph in $\mathcal{G}_2$ and $\mathcal{G}_3$ is 3-colorable. In this paper, we prove that no  $4$-vertex-critical graph  in $\bigcup_{\ell\geq5}\mathcal{G}_{\ell}$ has an odd $K_4$-subdivision.
Using this result, Chen proved that all graphs in $\bigcup_{\ell\geq5}\mathcal{G}_{\ell}$ are 3-colorable.

{\em\bf Key Words}: chromatic number; odd holes.
\end{abstract}

\section{Introduction}

All graphs considered in this paper are finite, simple, and undirected.
A {\em proper coloring} of a graph $G$ is an assignment of colors to the vertices of $G$ such that no two adjacent vertices receive the same color. A graph is {\em $k$-colorable} if it has a proper coloring using at most $k$ colors. The {\em chromatic number} of $G$, denoted by $\chi(G)$, is the minimum number $k$ such that $G$ is $k$-colorable.

The {\em girth} of a graph $G$, denoted by $g(G)$, is the minimum length of  cycles in $G$. A {\em hole} in a graph is an induced cycle of length at least four. An {\em odd hole} means a hole of odd length. For any integer $\ell\geq2$, let $\mathcal{G}_{\ell}$ be the family of graphs that have girth $2\ell + 1$ and have no odd holes of length at least $2\ell + 3$.  Robertson conjectured in \cite{ND11} that the Petersen graph is  the only graph in $\mathcal{G}_2$ that is 3-connected and internally 4-connected.
Plummer and Zha \cite{PM14} disproved Robertson's conjecture and proposed the conjecture that all 3-connected and internally 4-connected graphs in $\mathcal{G}_2$ have bounded chromatic numbers, and proposed the strong conjecture that such graphs are 3-colorable. The first was proved by Xu, Yu, and Zha \cite{XB17}, who proved that all graphs in $\mathcal{G}_2$ are 4-colorable. The strong conjecture proposed by Plummer and Zha in \cite{PM14} was solved by Chudnovsky and Seymour \cite{MC22}.
Wu, Xu, and Xu \cite{WD2204} showed that graphs in $\bigcup_{\ell\geq2}\mathcal{G}_{\ell}$  are 4-colorable and conjectured
\begin{conjecture}\label{conj1}(\cite{WD2204}, Conjecture 6.1.)
For each integer $\ell\geq2$, every graph in $\mathcal{G}_{\ell}$ is $3$-colorable.
\end{conjecture}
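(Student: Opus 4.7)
The plan is to reduce Conjecture \ref{conj1} to a structural statement about $4$-vertex-critical graphs. If $G \in \mathcal{G}_\ell$ is a counterexample of minimum order, then every proper vertex-deleted subgraph of $G$ is $3$-colorable, so $G$ is $4$-vertex-critical; thus it suffices to show that $\mathcal{G}_\ell$ contains no $4$-vertex-critical graph. The cases $\ell = 2$ (see \cite{MC22}) and $\ell = 3$ are already settled by prior work, so the remaining task is to handle $\ell \geq 4$.

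For $\ell \geq 4$, my strategy is to combine two complementary facts about any $4$-vertex-critical $G \in \mathcal{G}_\ell$: (I) $G$ must contain an odd $K_4$-subdivision, and (II) $G$ cannot contain one. For (I), I would begin with a shortest odd hole $C$, which has length exactly $2\ell+1$ by the girth hypothesis together with the absence of longer odd holes. Using $4$-criticality --- for every vertex $v$ a proper $3$-coloring of $G-v$ exists but fails to extend to $v$ --- one can force the neighbors of $C$ into very constrained patterns, and then attempt to extract two internally-disjoint short paths between non-adjacent vertices of $C$ which, together with arcs of $C$, realize a $K_4$-subdivision. The central difficulty here is controlling parities so that each of the four face walks is an induced odd hole of length exactly $2\ell+1$, rather than an even closed walk or a forbidden longer odd hole.

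Fact (II), which is the theorem actually proved in this paper for $\ell \geq 5$, is the piece I would then invoke. Its proof takes an odd $K_4$-subdivision $H$ with branch vertices $u_1,u_2,u_3,u_4$ and six internally-disjoint connecting paths whose lengths are pinned down by the requirement that each of the four face holes has length exactly $2\ell+1$; then it uses $4$-criticality to produce a proper $3$-coloring of $G - u_i$ that can be extended to $u_i$ via a Kempe-chain swap along two carefully chosen monochromatic subpaths of $H$, contradicting $\chi(G) \geq 4$. The technical heart of that step is verifying that the Kempe swap does not create a forbidden odd hole elsewhere in $G$, and it is precisely here that the hypothesis $\ell \geq 5$ provides the needed slack between $2\ell+1$ and the next admissible odd length.

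The main obstacle to a complete proof of Conjecture \ref{conj1} is therefore the case $\ell = 4$. With girth only $9$, there is essentially no room between the shortest odd hole and the longest admissible one, and the Kempe-chain recolorings that work for $\ell \geq 5$ can create $11$- or $13$-holes that are forbidden by membership in $\mathcal{G}_4$. Resolving this case will likely require either a finer local analysis of the interaction between $C$ and an attached shortest path of the putative subdivision, or the identification of an alternative critical substructure that can play the role of the odd $K_4$-subdivision when $\ell = 4$; this is where I expect the most work to lie.
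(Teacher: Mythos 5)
Your proposal is not a proof, and it cannot be repaired into one from what you have written. The statement you were asked about is stated in this paper as a \emph{conjecture}: the paper itself proves only Theorem \ref{main thm} (no $4$-vertex-critical graph in $\bigcup_{\ell\geq5}\mathcal{G}_{\ell}$ contains an odd $K_4$-subdivision), and the $3$-colorability for $\ell\geq5$ is obtained only in the companion work \cite{Chen22}, while $\ell=4$ remains open. Your step (I) --- that every $4$-vertex-critical graph in $\mathcal{G}_{\ell}$ must contain an odd $K_4$-subdivision --- is exactly the missing half, and you offer only a plan (``force the neighbors of $C$ into very constrained patterns, and then attempt to extract two internally-disjoint short paths\dots''), explicitly flagging the parity control of the four faces as the central difficulty without resolving it. That difficulty is the substance of the problem: nothing in this paper establishes (I), and the actual completion in \cite{Chen22} is a separate, substantial argument rather than a routine extraction from a shortest odd hole. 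Your closing admission that $\ell=4$ is unresolved likewise concedes that the conjecture as stated (all $\ell\geq2$) is not proved by your outline.

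Separately, your description of how fact (II), i.e.\ Theorem \ref{exclude odd k4}, is proved does not match the paper. There is no Kempe-chain recoloring anywhere in that proof. The paper's argument takes an odd $K_4$-subdivision $H$ chosen with minimum ``difference'' $d(H)$, uses Dirac's result that $4$-critical graphs are $3$-edge-connected (Lemma \ref{2-edge-cut}) to produce a direct connection $P$ onto an arris $P_2$ avoiding its two end edges, pins down where the attachment vertex $x$ can sit via the chordal-path parity lemma (Lemma \ref{easy case}) and Lemma \ref{odd k4}, and then derives a contradiction through forbidden cut structures: a $P_3$-cut (excluded by Lemma \ref{P3}), or, via Lemma \ref{2-vertex-cut}, a degree-$2$ vertex, $K_1$-cut or $K_2$-cut. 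The hypothesis $\ell\geq5$ enters through length counts such as $|P_2|\geq4$ needed for Lemma \ref{2-vertex-cut}, not through ``slack'' for recolorings. Since you invoke (II) as a citation this mischaracterization is not fatal by itself, but the genuine gap is that neither (I) nor the $\ell=4$ case is established, so the conjecture is not proved by your proposal.
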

\noindent Wu, Xu and Xu \cite{WD22} recently proved that Conjecture \ref{conj1} holds for $\ell=3$.


We say that a graph $G$ has an {\em odd $K_4$-subdivision} if some subgraph of $G$ is isomorphic to a $K_4$-subdivision which if embedded in the plane the boundary of each of its faces has odd length and is an induced cycle of $G$.
Note that an odd $K_4$-subdivision of $G$ maybe not induced. However, when $G\in\mathcal{G}_{\ell}$ for each integer $\ell\geq2$, all odd $K_4$-subdivisions of $G$ are induced by Lemma \ref{odd k4} (2).  
In this paper, we prove the following theorem.

\begin{theorem}\label{main thm}
No  $4$-vertex-critical graph in $\bigcup_{\ell\geq5}\mathcal{G}_{\ell}$ has an odd $K_4$-subdivision. 
\end{theorem}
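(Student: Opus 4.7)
I would argue by contradiction. Suppose $G \in \mathcal{G}_\ell$ with $\ell \geq 5$ is 4-vertex-critical and contains an odd $K_4$-subdivision $H$, with branch vertices $a_1,\dots,a_4$ and internally disjoint paths $P_{ij}$ joining $a_i$ to $a_j$. Every face of $H$ is an odd hole of $G$, and in a graph from $\mathcal{G}_\ell$ every odd hole has length exactly $2\ell+1$, so the four face identities $|P_{ij}|+|P_{jk}|+|P_{ik}|=2\ell+1$ force opposite paths in the underlying $K_4$ to have equal length: $|P_{12}|=|P_{34}|=a$, $|P_{13}|=|P_{24}|=b$, $|P_{14}|=|P_{23}|=c$, with $a+b+c=2\ell+1$. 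By Lemma \ref{odd k4}(2), $H$ is induced in $G$.

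Next I would analyze external attachments to $H$. If $u \notin V(H)$ has two neighbors $x,y$ on $H$, then combining $xuy$ with the $xy$-paths inside $H$ creates cycles whose lengths are restricted by the girth $2\ell+1$ and by the ban on odd holes of length greater than $2\ell+1$; using the bound $\ell \geq 5$ (so $g(G) \geq 11$), these restrictions leave only a very limited list of configurations for how $N(u)$ can meet each face of $H$, and in particular force the $H$-neighbors of $u$ to cluster within a short arc of a single path $P_{ij}$. Iterating this yields a rigid description of the subgraph induced on $V(H) \cup N(V(H))$, together with sharp parity/distance constraints on attachments near each branch vertex.

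Finally, I would invoke 4-vertex-criticality: for every $v \in V(G)$ the graph $G-v$ admits a proper 3-coloring. I would choose $v$ to be an internal vertex of one of the six paths, selected so that its parity interacts well with the triple $(a,b,c)$, and starting from any 3-coloring $\phi$ of $G-v$ apply Kempe-chain swaps along the three faces of $H$ that meet $v$ — exploiting the symmetry $|P_{ij}|=|P_{kl}|$ on opposite edges together with the identity $a+b+c=2\ell+1$ — to arrive at a 3-coloring in which the two $H$-neighbors of $v$ share a color and no external neighbor of $v$ uses the complementary color. Then $\phi$ extends to a proper 3-coloring of $G$, contradicting $\chi(G)=4$. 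The hardest part is controlling the Kempe swaps: one must ensure they do not cascade through the external attachments described in the second step and create new monochromatic edges elsewhere. This is precisely where $\ell \geq 5$ is needed, as the attachment patterns for $\ell \in \{2,3,4\}$ are not rigid enough for the same swap strategy to succeed.
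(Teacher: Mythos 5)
Your first step is fine and matches the paper's Lemma \ref{odd k4}: the four face identities force opposite arrises to have equal length, and $H$ is induced. Your second step is already shakier than you suggest: the true statement (Lemma \ref{odd k4}(3)) is that for $\ell\geq 3$ no vertex outside $H$ has even two neighbours in $V(H)$, so there is no ``limited list of configurations'' of multi-neighbour attachments to classify; but, more importantly, knowing that single vertices attach to $H$ at only one point does not give you any ``rigid description of the subgraph induced on $V(H)\cup N(V(H))$.'' The interaction of $G$ with $H$ happens through arbitrarily long paths (direct connections), not single attaching vertices, and pinning down where such a path can land on $H$ is where essentially all the work of the paper lies (its Claims 3.1.1--3.1.5, which also need a carefully chosen $H$ minimizing the ``difference'' $d(H)$ among all odd $K_4$-subdivisions).

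The genuine gap is your final step. You propose to take a 3-coloring of $G-v$ and perform Kempe-chain swaps ``along the three faces of $H$ that meet $v$'' to make the two $H$-neighbours of $v$ agree in color while no other neighbour of $v$ uses the remaining color; but you give no mechanism for this, and you yourself concede that controlling how the swaps cascade through the rest of $G$ is the hard part. A Kempe chain in a 3-coloring is a component of the subgraph on two color classes and can pass through $V(G)-N[H]$ in ways your structural information says nothing about, so there is no reason the swap stays ``along a face'' or terminates where you want; nothing in the parity data $(a,b,c)$ with $a+b+c=2\ell+1$ prevents a chain from re-entering $N(v)$ and recreating the obstruction. This is not a fixable detail but the entire content of the theorem in your outline. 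The paper avoids colorings altogether: it uses 4-vertex-criticality only through its connectivity consequences (Dirac's theorem gives no $2$-edge-cut, hence a direct connection $P'$ from $P_2^*$ to the rest of $H$ avoiding the two end-edges of $P_2$; criticality also excludes $K_2$- and $P_3$-cuts and degree-2 vertices), then shows by case analysis that $P'$ must attach at $u_3$ or $u_4$ adjacent to $P_2^*$, and finally that the resulting vertex cut forces, via Lemma \ref{2-vertex-cut}, a degree-2 vertex, $K_1$-cut or $K_2$-cut --- a contradiction. If you want to salvage your plan you would need to replace the Kempe-swap step with an argument of this structural kind, or else prove a concrete recoloring lemma with explicit control of the chains, which is not present in your sketch.
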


Using Theorem \ref{main thm}, Chen \cite{Chen22} proved that Conjecture \ref{conj1} holds for  all $\ell\geq5$. Recently, following idea in this paper and \cite{Chen22}, Wang and Wu \cite{WW23} further proved that Conjecture \ref{conj1} holds for $\ell=4$.

\section{Preliminaries}
A {\em cycle} is a connected $2$-regular graph. Let $G$ be a graph. A vertex $v\in V(G)$ is called a {\em degree-$k$ vertex} if it has exactly k neighbours.
For any $U\subseteq V(G)$, let $G[U]$ be the subgraph of $G$ induced on $U$. For subgraphs $H$ and $H'$ of $G$, set $|H|:=|E(H)|$ and $H\Delta H':=E(H)\Delta E(H')$.
Let $H\cup H'$ denote the subgraph of $G$ whose vertex set is $V(H)\cup V(H')$ and edge set is $E(H)\cup E(H')$. Let $H\cap H'$ denote the subgraph of $G$ with edge set  $E(H)\cap E(H')$ and without isolated vertex. Let $N(H)$ be the set of vertices in $V(G)-V(H)$ that have a neighbour in $H$. Set $N[H]:=N(H)\cup V(H)$.

Let $P$ be an $(x,y)$-path and $Q$ be a $(y,z)$-path.
When $P$ and $Q$ are internally disjoint, 
let $PQ$ denote the $(x,z)$-path $P\cup Q$. Evidently, $PQ$ is a path when $x\neq z$, and $PQ$ is a cycle when $x=z$.
Let $P^*$ denote the set of internal vertices of $P$. When $u,v\in V(P)$, let $P(u,v)$ denote the subpath of $P$ with ends $u, v$. For simplicity, we will let $P^*(u,v)$ denote $(P(u,v))^*$.

%
%
%
A graph is $k$-{\em vertex-critical} if $\chi(G)=k$ but $\chi(G\setminus v)<k$ for each $v\in V(G)$.
Dirac in \cite{DG53} proved that every $k$-vertex-critical graph is $(k-1)$-edge-connected. Hence, we have

\begin{lemma}\label{2-edge-cut}
For each integer $k\geq4$, each $k$-vertex-critical graph $G$ has no $2$-edge-cut.
\end{lemma}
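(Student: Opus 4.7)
The plan is to invoke Dirac's theorem, which is stated in the sentence immediately preceding the lemma, as a black box. Recall that a $k$-vertex-critical graph $G$ is one satisfying $\chi(G)=k$ and $\chi(G-v)<k$ for every $v\in V(G)$. Dirac's theorem asserts that any such $G$ is $(k-1)$-edge-connected. Taking $k\geq 4$ in the hypothesis of the lemma, this yields that $G$ is at least $3$-edge-connected. By definition, a graph possessing a $2$-edge-cut has edge-connectivity at most $2$, so no graph of edge-connectivity $\geq 3$ admits a $2$-edge-cut. Combining the two observations gives the conclusion directly.

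Because the lemma is explicitly phrased as a corollary of Dirac's result (the paper introduces it with ``Hence, we have\ldots''), I expect no real obstacle in the argument: the entire content is the one-step implication ``$(k-1)$-edge-connected with $k\geq 4$ $\Rightarrow$ edge-connectivity $\geq 3$ $\Rightarrow$ no $2$-edge-cut.'' The only point to double-check is that the convention of ``$k$-vertex-critical'' used throughout the paper matches the convention under which Dirac proved his edge-connectivity bound; once that is verified, the proof reduces to a single sentence citing \cite{DG53}, and no case analysis or auxiliary construction is needed.
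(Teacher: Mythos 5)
Your proposal is correct and matches the paper exactly: the paper derives this lemma as an immediate consequence of Dirac's theorem that every $k$-vertex-critical graph is $(k-1)$-edge-connected, which for $k\geq 4$ rules out any $2$-edge-cut. No further argument is given or needed.
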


A {\em theta graph} is a graph that consists of a pair of distinct vertices joined by three internally disjoint paths. Let $C$ be a hole of a graph $G$. A path $P$ of $G$ is a {\em chordal path} of $C$ if $V(P^*)\cap V(C)=\emptyset$ and $C\cup P$ is an induced theta-subgraph of $G$.  Lemma \ref{easy case} will be frequently used. 

\begin{lemma}\label{easy case}
Let $\ell\geq 2$ be an integer and $C$ be an odd hole of a graph $G\in \mathcal{G}_{\ell}$. Let $P$ be a chordal path of $C$, and $P_1,P_2$ be the internally disjoint paths of $C$ that have the same ends as $P$. Assume that $|P|$ and $|P_1|$ have the same parity.  If $|P_1|\neq 1$, then $|P_1|>|P_2|$
and all chordal paths of $C$ with the same ends as $P_1$ have length $|P_1|$.
\end{lemma}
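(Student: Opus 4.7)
The plan is to use two straightforward facts about any $G\in\mathcal{G}_\ell$: every odd hole has length exactly $2\ell+1$ (combining girth $2\ell+1$ with the absence of longer odd holes), and every cycle has length at least $2\ell+1$, so in particular every even cycle has length at least $2\ell+2$. Since $C$ is an odd hole of $G$, this gives $|C|=2\ell+1$ and hence $|P_1|+|P_2|=2\ell+1$. Because $C\cup P$ is an induced theta-subgraph, both $P\cup P_1$ and $P\cup P_2$ are induced cycles of $G$, i.e.\ holes.

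Next I would exploit the parity hypothesis. Since $|P|$ and $|P_1|$ have the same parity, $P\cup P_1$ is an even hole and $P\cup P_2$ is an odd hole. The odd hole constraint forces $|P|+|P_2|=2\ell+1=|P_1|+|P_2|$, hence $|P|=|P_1|$. The even hole then has length $|P|+|P_1|=2|P_1|$, which must be at least $2\ell+2$, yielding $|P_1|\geq \ell+1$. Substituting into $|P_1|+|P_2|=2\ell+1$ gives $|P_2|\leq \ell<\ell+1\leq |P_1|$, which establishes the inequality chain in the statement. The hypothesis $|P_1|\neq 1$ is needed precisely to rule out the degenerate case where the derived equality $|P|=|P_1|=1$ would force $P$ and $P_1$ to coincide as parallel edges, violating the chordal-path assumption.

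For the uniqueness clause, let $Q$ be any chordal path of $C$ with the same ends as $P_1$, so that $Q\cup P_1$ and $Q\cup P_2$ are both holes, and split on the parity of $|Q|$. If $|Q|$ has the same parity as $|P_1|$, then $Q\cup P_2$ is an odd hole, so $|Q|+|P_2|=2\ell+1$ and hence $|Q|=|P_1|$, as required. If $|Q|$ has the opposite parity to $|P_1|$, then $Q\cup P_1$ is an odd hole, giving $|Q|=|P_2|$, while $Q\cup P_2$ is an even hole of length $\geq 2\ell+2$, which rearranges to $2|P_2|\geq 2\ell+2$, i.e.\ $|P_2|\geq \ell+1$, contradicting $|P_2|\leq \ell$ obtained in the previous paragraph. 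Only the first case survives, so $|Q|=|P_1|$.

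There is no real obstacle here; the entire argument is pure parity-plus-girth bookkeeping, and the only subtlety is remembering that in $\mathcal{G}_\ell$ the girth cuts off both the odd holes (at exactly $2\ell+1$) and the even cycles (at $2\ell+2$), which together pin down all the relevant lengths.
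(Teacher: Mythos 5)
Your proof is correct and follows essentially the same route as the paper: since the theta $C\cup P$ is induced, parity makes $P\cup P_2$ an odd hole of length exactly $2\ell+1$ (giving $|P|=|P_1|$) and $P\cup P_1$ an even cycle of length at least $2\ell+2$ (giving $|P_1|\geq\ell+1$, hence $|P_2|\leq\ell$), with the uniqueness clause handled by the same parity split. The paper compresses this into a few lines, and your explicit treatment of an arbitrary chordal path $Q$ with the same ends just spells out what the paper leaves implicit.
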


\begin{proof}
Since $|C|=2\ell+1$, $|P_1|\neq 1$ and $|P|$ and $|P_1|$ have the same parity, $P\cup P_2$ is an odd hole. Moreover, since $g(G)=2\ell+1$ and all odd holes in $G$ have length $2\ell+1$, we have $\ell+1\leq|P_1|=|P|$ and $|P_2|\leq \ell$, 
so $|P_1|>|P_2|$ and all chordal paths of $C$ with the same ends as $P_1$ have length $|P_1|$.
\end{proof}

Let $P$ be a path with $i$ vertices. If $G-V(P)$ is disconnected, then we say that $P$ is a {\em $P_i$-cut}. Usually, a $P_2$-cut is also called a {\em $K_2$-cut}. Evidently, every $k$-vertex-critical graph has no $K_2$-cut. Chudnovsky and Seymour in \cite{MC22} proved that every $4$-vertex-critical graph $G$ in $\mathcal{G}_2$ has no $P_3$-cut. Using the same argument as \cite{MC22}, Wu et al. \cite{WD22} extend this result to graphs in $\bigcup_{\ell\geq2}\mathcal{G}_{\ell}$. Since the paper \cite{WD22} does not include a proof of Lemma \ref{P3}, we give a proof here for completeness. 
\begin{lemma}\label{P3}(\cite{WD22})
For any number $\ell\geq2$, every $4$-vertex-critical graph in $\mathcal{G}_{\ell}$ has neither a $K_2$-cut nor a $P_3$-cut.
\end{lemma}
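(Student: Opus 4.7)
I will prove each part by contradiction: starting from a cut of the indicated form, I produce a proper $3$-coloring of $G$, contradicting $\chi(G)=4$.

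\textbf{$K_2$-cut.} Suppose $\{u,v\}$ with $uv\in E(G)$ is a vertex cut; write $V(G)\setminus\{u,v\}=A\sqcup B$ with no $A$--$B$ edges. The induced subgraphs $G[A\cup\{u,v\}]$ and $G[B\cup\{u,v\}]$ are proper subgraphs of $G$, and each admits a $3$-coloring by vertex-criticality. In each coloring the edge $uv$ forces $u,v$ to receive distinct colors, so after a uniform permutation of the three colors on one side the two colorings agree on $\{u,v\}$; pasting yields a $3$-coloring of $G$.

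\textbf{$P_3$-cut.} Suppose $P=p_1p_2p_3$ is a $P_3$-cut. Since $g(G)=2\ell+1\ge 5$, $p_1p_3\notin E(G)$, so $P$ is induced. Write $V(G)\setminus V(P)=A\sqcup B$ without $A$--$B$ edges and put $G_A:=G[A\cup V(P)]$, $G_B:=G[B\cup V(P)]$, both $3$-colorable. Fix $3$-colorings $f_A, f_B$ normalized so that $f_A(p_2)=f_B(p_2)$; then $f_A(p_1), f_A(p_3), f_B(p_1), f_B(p_3)$ lie in the $2$-element palette of remaining colors, on which a transposition is still valid. So $f_A, f_B$ can be combined into a $3$-coloring of $G$ unless, after possibly swapping $A$ with $B$, every $3$-coloring of $G_A$ assigns $p_1, p_3$ the same color while every $3$-coloring of $G_B$ assigns them different colors. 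Equivalently, in this obstruction case $G_A/p_1p_3$ and $G_B+p_1p_3$ are $3$-colorable while $G_A+p_1p_3$ and $G_B/p_1p_3$ are $4$-chromatic.

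To contradict the obstruction, I build an odd hole of length exceeding $2\ell+1$. Since $G$ has no $1$-cut, $p_1$ and $p_3$ lie in the same component of $G-p_2$, and a more careful analysis (using the obstruction) delivers $(p_1,p_3)$-paths in both $G_A-p_2$ and $G_B-p_2$. Take shortest such paths $Q_A, Q_B$; each is induced, and no edges join $V(Q_A)\setminus V(P)\subseteq A$ to $V(Q_B)\setminus V(P)\subseteq B$, so $Q_A\cup Q_B$ is a hole of $G$. Since $Q_A\cup p_1p_2p_3$ is a cycle of length at least $g(G)=2\ell+1$, $|Q_A|\ge 2\ell-1$, and likewise $|Q_B|\ge 2\ell-1$, whence $|Q_A\cup Q_B|\ge 4\ell-2>2\ell+1$. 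If this hole is odd, it is the desired forbidden odd hole. Otherwise $|Q_A|$ and $|Q_B|$ share parity, and I use the coloring obstruction to produce a $(p_1,p_3)$-path of opposite parity in one of the sides; substituting it into the hole construction produces an odd cycle of length $>2\ell+1$, from which girth considerations extract an induced odd hole of length still exceeding $2\ell+1$, the required contradiction.

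The main obstacle will be this last parity swap, since a $3$-coloring obstruction does not mechanically yield parity data about $(p_1,p_3)$-paths. The resolution, following Chudnovsky--Seymour \cite{MC22}, combines the obstruction with the two structural constraints of $\mathcal{G}_\ell$ — girth $2\ell+1$ and no longer odd holes — via a case analysis on components of $G_A-p_2$ and $G_B-p_2$. Because only these two constraints enter, the argument from $\ell=2$ transfers verbatim to every $\ell\ge 2$.
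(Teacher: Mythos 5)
Your $K_2$-cut argument is complete and correct, and your reduction of the $P_3$-cut case to the obstruction ``one side forces $p_1,p_3$ to get the same color, the other forces them to get different colors'' is the standard and correct setup. Note, however, that the paper does not actually reprove this lemma: it cites \cite{MC22} and merely observes that the Chudnovsky--Seymour proof uses only the girth and no-longer-odd-hole hypotheses, so it applies verbatim for every $\ell\ge 2$. Had you done the same -- cite \cite{MC22} and check that only those two hypotheses enter -- you would be on the same footing as the paper.

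As a self-contained proof, though, your $P_3$-cut argument has a genuine gap exactly where you acknowledge it. The contradiction must come from ruling out the obstruction case, and your proposed mechanism -- ``use the coloring obstruction to produce a $(p_1,p_3)$-path of opposite parity in one of the sides'' -- is never substantiated; a statement of the form ``every $3$-coloring of $G_A$ gives $p_1,p_3$ the same color'' carries no direct parity information about induced $(p_1,p_3)$-paths, and bridging that is precisely the hard content of the lemma. Deferring it with ``following Chudnovsky--Seymour, the case analysis transfers verbatim'' is not a proof of that step, and it is not even clear that their argument proceeds via your long hole $Q_A\cup Q_B$ and a parity swap, so the citation does not certify your sketch. (Two smaller points: the existence of $(p_1,p_3)$-paths avoiding $p_2$ inside both $G_A$ and $G_B$ should be derived explicitly -- it follows from $2$-connectedness of a $4$-vertex-critical graph together with the already-proved absence of $K_2$-cuts, since a component of $G-V(P)$ attaching only to $\{p_1,p_2\}$ or $\{p_2,p_3\}$ would give a $K_2$-cut -- and in the final even case one must still extract an \emph{induced} odd cycle of length exceeding $2\ell+1$ from the odd closed walk you build, which requires an argument of the kind the paper uses elsewhere, e.g.\ in the last paragraph of the proof of Theorem \ref{exclude odd k4}.)
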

\begin{proof}
It is well-known that every $k$-vertex-critical graph has no clique as a cut. Hence, it suffice to show that every $4$-vertex-critical graph in $\mathcal{G}_{\ell}$ has no $P_3$-cut. Let $G\in\mathcal{G}_{\ell}$ be a $4$-vertex-critical graph. Assume to the contrary that $P=v_1v_2v_3$ is a path such that $G\backslash \{v_1,v_2,v_3\}$ is disconnected. Since $G$ has no $K_3$ as its cut, $v_1v_3\notin E(G)$. Let $A_1$ be the a component of $G\backslash \{v_1,v_2,v_3\}$, and let $A_2$ be the union of all other components. Set $G_i:=G[A_i\cup \{v_1,v_2,v_3\}]$ for $i=1,2$.
Since $G$ is $4$-vertex-critical, both $G_1$ and $G_2$ are 3-colorable. Let $\phi_i:V(G_i)\rightarrow\{1,2,3\}$ be a 3-coloring for $i=1,2$. By symmetry we may assume that $\phi_i(v_1)=1$ and $\phi_i(v_2)=2$ for $i=1,2$. Thus $\phi_1(v_3),\phi_2(v_3)\in \{1,3\}$. If $\phi_1(v_3)=\phi_2(v_3)$, then $G$ is 3-colorable, which is a contradiction. Thus by symmetry we may assume that $\phi_1(v_3)=1$ and $\phi_2(v_3)=3$. Let $H_1$ be the subgraph of $G_1$ induced on the set of vertices $v\in V(G_1)$ with $\phi_1(v)\in \{1,3\}$. If $v_1,v_3$ belong to different components of $H_1$, then by exchanging colors in the component containing $v_3$, we obtain another 3-coloring of $G_1$ that can be combined with $\phi_2$ to show that $G$ is 3-colorable. So $v_1,v_3$ belong to the same component of $H_1$. Then there is an induced $(v_1, v_3)$-path $P_1$ in $H_1$ having even length as $\phi_1(v_1)=1=\phi_1(v_3)$. Similarly, there is an induced $(v_1, v_3)$-path $P_2$ in $G_2$ having odd length as $\phi_2(v_1)=1$ and $\phi_2(v_3)=3$. Moreover, since $PP_1,PP_2$ are cycles of $G$ and $g(G)=2\ell+1$, we have $|P_1|\geq 2\ell-1$ and $|P_2|\geq 2\ell$, so $P_1\cup P_2$ is an odd hole of $G$ of length at least $4\ell-1$, which is a contradiction as $G\in\mathcal{G}_{\ell}$.
\end{proof}

\begin{lemma}\label{same length}
Let $\ell\geq 2$ be an integer and $x,y$ be non-adjacent vertices of a graph $G\in \mathcal{G}_{\ell}$. Let $P$ be an induced $(x,y)$-path of $G$. If $|P|\leq \ell$ and all induced $(x,y)$-paths have length $|P|$, then no block of $G$ contains two non-adjacent vertices in $V(P)$. In particular, each vertex in $P^*$ is a cut-vertex of $G$.
\end{lemma}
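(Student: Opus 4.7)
The plan is a proof by contradiction combining a block-cut-tree reduction with a girth-based length-shifting argument. The ``in particular'' clause follows immediately from the main statement: for each internal vertex $v_k$ of $P$, the non-adjacent pair $v_{k-1}, v_{k+1}\in V(P)$ lies in distinct blocks, so some cut-vertex of $G$ separates them, and since that cut-vertex must lie on the induced subpath $v_{k-1}v_kv_{k+1}$, it is forced to equal $v_k$.

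For the main statement I assume some block $B$ contains two non-adjacent vertices $v_i, v_j$ of $V(P)$, chosen so that $j - i \geq 2$ is minimum, and first reduce to $j = i+2$. If $j - i \geq 3$ and $v_i, v_{i+2}$ lay in different blocks, any cut-vertex separating them would have to lie on the subpath $v_iv_{i+1}v_{i+2}$ and thus equal $v_{i+1}$; but $v_{i+1}$ is avoided by $P(v_{i+2}, v_j)$, so it would also separate $v_i$ from $v_j$, contradicting $v_i, v_j \in V(B)$. A block-cut-tree argument next forces $v_{i+1} \in V(B)$: otherwise $B$ together with the two distinct blocks containing the edges $v_iv_{i+1}$ and $v_{i+1}v_{i+2}$, connected through the cut-vertices $v_i, v_{i+1}, v_{i+2}$, would form a $6$-cycle in the block-cut tree, which is impossible.

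With $v_i, v_{i+1}, v_{i+2}\in V(B)$, I take $Q$ to be a shortest $(v_i, v_{i+2})$-path in $G - v_{i+1}$; such $Q$ exists because $B - v_{i+1}$ is connected. Shortest-path considerations force $Q$ to be induced and rule out chords from $v_i$ or $v_{i+2}$ into $Q^*$, so the only possible chords of the cycle $C_0$ formed by the $2$-edge subpath $v_iv_{i+1}v_{i+2}$ together with $Q$ are edges from $v_{i+1}$ into $Q^*$. Decomposing $C_0$ at those chords into induced sub-cycles and applying $g(G) = 2\ell+1$ to each yields $|Q| \geq 2\ell - 1$. I then consider the $(x,y)$-walk $W := P(x, v_i)\, Q\, P(v_{i+2}, y)$, of length $|P| + |Q| - 2 \geq |P| + 2\ell - 3 > |P|$ since $\ell \geq 2$, and extract from it an induced $(x,y)$-path of length strictly greater than $|P|$, contradicting the hypothesis that every induced $(x,y)$-path has length $|P|$.

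The main obstacle is this last extraction: a priori $Q$ may meet $V(P) \setminus \{v_i, v_{i+2}\}$, or $W$ may carry chords of $G$ into $Q^*$ from other $V(P)$-vertices. If some $v_k \in V(Q) \cap V(P) \setminus \{v_i, v_{i+1}, v_{i+2}\}$, then $v_i$ and $v_k$ both lie in $V(B)$, producing another non-adjacent pair of $V(P)$-vertices in a common block; jointly minimizing the pair together with $|Q|$ (e.g., choosing them so that $|Q|$ is smallest over all valid gap-$2$ pairs) should force $V(Q)\cap V(P) = \{v_i, v_{i+2}\}$. A chord $v_tz$ of $W$ with $v_t\in V(P)\setminus\{v_i, v_{i+1}, v_{i+2}\}$ and $z\in Q^*$ similarly creates, via a subpath of $P$ through $v_t$ and a subpath of $Q$ through $z$, another cycle of length $\geq 2\ell+1$ by the same girth argument, so any shortcut it induces remains strictly longer than $|P|$; iterating such shortcuts extracts an induced $(x,y)$-path of length $\neq |P|$, giving the desired contradiction.
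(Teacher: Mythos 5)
Your structural reduction (the minimal gap-$2$ pair, forcing $v_{i+1}\in V(B)$ via the block-cut tree, and the girth bound $|Q|\ge 2\ell-1$ for a shortest $(v_i,v_{i+2})$-path in $G-v_{i+1}$) is sound, but the proof breaks at exactly the step you yourself flag as the ``main obstacle'', and that step is the whole point. Since by hypothesis every induced $(x,y)$-path has length $|P|$, ``extracting an induced $(x,y)$-path'' from the walk $W=P(x,v_i)\,Q\,P(v_{i+2},y)$ can never by itself yield a contradiction: whatever induced path the extraction produces automatically has length exactly $|P|$. To contradict the hypothesis you must exhibit a concrete induced $(x,y)$-path and prove its length differs from $|P|$, which requires controlling all intersections of $Q$ with $V(P)$ (possibly including $x$ or $y$ themselves) and all chords from $Q^*$ to $V(P)\setminus\{v_i,v_{i+1},v_{i+2}\}$. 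Your proposed repairs are not arguments: the joint minimization that ``should force'' $V(Q)\cap V(P)=\{v_i,v_{i+2}\}$ is not carried out (and its interaction with keeping the pair at gap $2$ is unclear), and the claim that a chord $v_tz$ with $z\in Q^*$ produces ``another cycle of length $\ge 2\ell+1$'' does not follow: the relevant cycle need not be induced (for instance $v_{i+1}$ may still have neighbours in $Q^*$), so the girth hypothesis gives no lower bound on $|Q(z,v_{i+2})|$ or on the length of the shortcut path, which can perfectly well come out equal to $|P|$.

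The paper closes precisely this gap by choosing the auxiliary path differently: inside the block one takes $Q$ induced, with both ends in $V(P)$, with $V(Q^*)\cap V(P)=\emptyset$, and with $|Q|$ minimum. This choice makes $C\Delta P$ (where $C$ is the unique cycle of $P\cup Q$) an induced $(x,y)$-path, so the hypothesis pins its length at $|P|\le\ell$, forcing $|Q|=|C\cap P|\le\ell$ and hence $|C|\le 2\ell<2\ell+1=g(G)$ --- a contradiction with the girth rather than with the existence of a long induced path. If you want to keep your ``long path'' strategy, you would need an analogous extremal choice guaranteeing inducedness of the combined path before any length comparison; as written, the proposal is incomplete at its decisive step.
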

\begin{proof}
Assume not. Then there is a block $B$ of $G$ containing two consecutive edges of $P$. Let $Q$ be an induced path in $B$ with ends in $V(P)$ and with $V(P)\cap V(Q^*)=\emptyset$. Since every pair of edges in a 2-connected graph is contained in a cycle, such a $Q$ exists. 
Without loss of generality we may further assume that $Q$ is chosen with $|Q|$ as small as possible. Let $C$ be the unique cycle in $P\cup Q$. Then $C\Delta P$ is an $(x,y)$-path. Since $Q$ is induced, the ends of $Q$ are not adjacent. Moreover, since $Q$ is chosen with $|Q|$ as small as possible, $C\Delta P$ is an induced $(x,y)$-path, so $|C\Delta P|=|P|\leq \ell$ by the assumption of the lemma. Hence, $|C|\leq 2\ell$, contrary to the fact $g(G)=2\ell+1$.
\end{proof}

\begin{lemma}\label{2-vertex-cut}
Let $\ell\geq 4$ be an integer and $x,y$ be non-adjacent vertices of a graph $G\in \mathcal{G}_{\ell}$.  Let $X$ be a vertex cut of $G$ with $\{x,y\}\subseteq X\subseteq N[\{x,y\}]$, and $G_1$ be an induced subgraph of $G$ whose vertex set consists of $X$ and the vertex set of a component of $G-X$. If all induced $(x,y)$-paths in $G_1$ have  length $k$ with $4\leq k\leq\ell$, then $G$ has a degree-$2$ vertex, a $K_1$-cut, or a $K_2$-cut.
\end{lemma}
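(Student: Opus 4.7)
The plan is to argue by contradiction. Suppose $G$ has no degree-$2$ vertex, no $K_1$-cut, and no $K_2$-cut. Fix an induced $(x,y)$-path $P=xv_1v_2\cdots v_{k-1}y$ of length $k$ in $G_1$. Because the proof of Lemma \ref{same length} only requires the ambient graph to have girth at least $2\ell+1$, and $g(G_1)\ge g(G)=2\ell+1$, applying that argument inside $G_1$ shows that no block of $G_1$ contains two non-adjacent vertices of $V(P)$; in particular every internal vertex of $P$ is a cut-vertex of $G_1$.

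I then zoom in on the middle vertex $v_2$, which exists because $k\ge 4$. Since $P$ is induced and $k\ge 4$, $v_2$ is non-adjacent to both $x$ and $y$, so $v_2\notin X$ and hence $v_2\in V(C_1)$; consequently every neighbour of $v_2$ in $G$ lies in $V(G_1)$. A $4$-cycle argument rules out any $X$-neighbour of $v_2$ other than $v_1$ or $v_3$: such a neighbour would lie in $N(x)\cup N(y)$ and close a $4$-cycle with two edges of $P$, contradicting $g(G)\ge 2\ell+1\ge 9$. The no-degree-$2$ assumption then supplies $v_2$ with a neighbour $w\in V(C_1)\setminus\{v_1,v_3\}$.

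Let $A$ and $B$ be the components of $G_1-v_2$ containing $x$ and $y$ respectively. If $w$ lay in another component $D$ of $G_1-v_2$, then $D\subseteq V(C_1)$ (every $X$-vertex is adjacent to $x$ or $y$ and therefore sits in $A$ or $B$), and vertices of $D$ would have no neighbour in $V(G)\setminus V(G_1)$ either, so $v_2$ would be a $K_1$-cut of $G$, a contradiction. Hence by symmetry we may assume $w\in A$, and since $w\sim v_2$ and $w\neq v_1$, also $w\in B_v$, where $B_v$ denotes the component of $G_1-v_1$ containing $v_2$.

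Setting $H:=A\cap B_v$, I have $w\in H$, so $H\neq\emptyset$. I claim that in $G$ every vertex of $H$ has all its neighbours in $H\cup\{v_1,v_2\}$. Indeed a vertex $u\in H\subseteq V(C_1)$ has no neighbour in $V(G)\setminus V(G_1)$; an edge from $u$ to any $z$ in the component $A_x$ of $G_1-v_1$ containing $x$ would force $u,z$ into the same component of $G_1-v_1$, contradicting $u\in B_v$ and $z\in A_x$, while an edge from $u$ to any $z\in B$ would force $u,z$ into the same component of $G_1-v_2$, contradicting $u\in A$ and $z\in B$. Since every vertex of $X-\{v_1\}$ is adjacent to $x$ or to $y$ and therefore lies in $A_x\cup B$, the only possible $X$-neighbour of $u$ is $v_1$, and the only $V(C_1)$-neighbour of $u$ outside $H$ is $v_2$. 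Thus $H$ is separated from the rest of $G$ in $G-\{v_1,v_2\}$; since $x\notin H\cup\{v_1,v_2\}$, the adjacent pair $\{v_1,v_2\}$ forms a $K_2$-cut of $G$, the desired contradiction. The main obstacle is identifying the right subgraph $H$ and verifying that it is truly isolated in $G-\{v_1,v_2\}$; once this setup is in place the $K_2$-cut appears directly, but the verification rests essentially on the block structure supplied by Lemma \ref{same length} together with the containment $X\subseteq N[\{x,y\}]$ and the girth bound $g(G)\ge 2\ell+1\ge 9$.
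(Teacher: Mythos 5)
Your overall strategy is essentially the paper's: apply Lemma \ref{same length} inside $G_1$, focus on an internal vertex of $P$ at distance at least $2$ from both $x$ and $y$ (so it avoids $X$, has all its neighbours in $G_1$, and has degree at least $3$ there), and then use $X\subseteq N[\{x,y\}]$ to convert the cut structure of $G_1$ into a $K_1$-cut or $K_2$-cut of $G$. The paper does the bookkeeping with the blocks of $G_1$ at that vertex; you do it with components of $G_1-v_2$ and $G_1-v_1$ and an explicitly constructed separated set $H$. That difference is cosmetic rather than substantive.

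There is, however, a genuine gap in your write-up: you never establish that $A\neq B$ and $A_x\neq B_v$, i.e.\ that $v_2$ separates $x$ from $y$ in $G_1$ and that $v_1$ separates $x$ from $v_2$. The statement you actually invoke when you apply Lemma \ref{same length} in this step (``every internal vertex of $P$ is a cut-vertex of $G_1$'') says nothing about \emph{which} pairs of vertices are separated, yet your contradictions ``$u\in B_v$ and $z\in A_x$'', ``$u\in A$ and $z\in B$'', and the assertion $x\notin H$ all rely on these separations. If, say, $A=B$, then $y\in A$ and also $y\in B_v$ (the path $v_2v_3\cdots y$ avoids $v_1$), so $y\in H$; since $y\in X$ it may have neighbours outside $G_1$, and the claim that every vertex of $H$ has all its neighbours in $H\cup\{v_1,v_2\}$ fails, collapsing the $K_2$-cut conclusion. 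The gap is repairable from the full block statement you quoted: $x$ and $v_2$ are non-adjacent vertices of $P$, hence lie in no common block of $G_1$, so some cut vertex lies on every $(x,v_2)$-path, in particular on $xv_1v_2$, and it must be $v_1$; and if $x,y$ were joined by a path $R$ in $G_1-v_2$, then taking on each side of $v_2$ the first vertex of $P$ lying on $R$ produces a cycle of $G_1$ through two non-adjacent vertices of $P$, which would then lie in a common block, a contradiction. Two smaller points: ``by symmetry we may assume $w\in A$'' is not a symmetry of the configuration (swapping $x$ and $y$ sends $v_2$ to $v_{k-2}$), so for $w\in B$ you must run the analogous argument yielding the $K_2$-cut $\{v_2,v_3\}$, which needs the corresponding separation fact for $v_3$; and when the forbidden $X$-neighbour of $v_2$ is adjacent to $y$, the short cycle has length at most $k\le\ell$ rather than $4$ --- still below the girth, so the conclusion stands.
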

\begin{proof}
Assume that $G$ has no degree-2 vertices. Let $P$ be an induced $(x,y)$-path in $G_1$. Let $uvw$ be a subpath of $P^*$. Such $uvw$ exists as $k\geq4$. By the definition of $G_1$, we have $v\notin X$, so $N_G[v]=N_{G_1}[v]$, which implies $d_{G_1}(v)\geq3$. 
By applying Lemma \ref{same length} to $G_1$, there is a block $B$ of $G_1$ such that either $V(B)\cap V(P)=\{v\}$, or $B$ is not isomorphic to $K_2$ and $V(B)\cap V(P)$ is $\{u, v\}$ or $\{u, v\}$. When the first case happens, since $X\subseteq N[\{x,y\}]$, $x,y\notin B$ and $v\notin X$, we have $X\cap V(B)=\emptyset$, for otherwise $P(x,v)$ or $P(v,y)$ is contained in a cycle of $P\cup B$,
so the vertex $v$ is a cut-vertex of $G$ as $X$ is a vertex cut of $G$. When the latter case happens, by symmetry we may assume that $V(B)\cap V(P)=\{u, v\}$. Since $B$ is a block of $G_1$, $X\subseteq N[\{x,y\}]$ and $uvw$ is a subpath of $P^*$, we have $V(B)\cap X=\{u\}\cap X$, so $\{u, v\}$ is a $K_2$-cut of $G_1$ and $G$.
\end{proof}

\begin{figure}[htbp]
\begin{center}
\includegraphics[height=6cm]{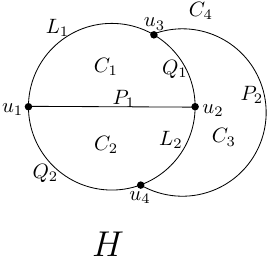}
\caption{$u_1,u_2,u_3,u_4$ are the degree-3 vertices of $H$. All faces $C_1, C_2, C_3,C_4$ of $H$ are odd holes. $\{P_1,P_2\}$, $\{Q_1,Q_2\}$, $\{L_1,L_2\}$ are the pairs of vertex disjoint arrises of $H$.}
\label{H}
\end{center}
\end{figure}

Let $H$ be a graph that is isomorphic to a  subdivision of $K_4$, and let $P$ be a path of $H$ whose ends are degree-3 vertices in $H$. If $P^*$ contains no degree-3 vertex of $H$, then we say that $P$ is an {\sl arris} of $H$.  Evidently, there are exactly six arrises of $H$. See Figure \ref{H}.

\begin{lemma}\label{odd k4}
For any integer $\ell\geq 2$, if  a graph $G\in \mathcal{G}_{\ell}$ has an an odd $K_4$-subdivision $H$, then the following statements hold.
\begin{itemize}
\item[(1)] Each pair of vertex disjoint arrises in $H$ have the same length and their lengths are at most $\ell$.
\item[(2)] $H$ is an induced subgraph of $G$.
\item[(3)] When  $\ell\geq3$, every vertex in $V(G)-V(H)$ has at most one neighbour in $V(H)$.
\end{itemize}
\end{lemma}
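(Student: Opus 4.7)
The plan is to exploit that each of the four faces of the $K_4$-subdivision $H$ --- with branch vertices $u_1, u_2, u_3, u_4$ and arrises $P_{ij}$ joining $u_i$ and $u_j$ --- is an odd hole of $G$ and hence has length exactly $2\ell+1$. The four identities $|P_{ij}|+|P_{ik}|+|P_{jk}|=2\ell+1$, one per face, reduce by pairwise subtraction to the three equalities $|P_{12}|=|P_{34}|$, $|P_{13}|=|P_{24}|$, $|P_{14}|=|P_{23}|$, which is the first half of (1). For the length bound in (1), I would consider the cycle $D=P_{13}\cup P_{23}\cup P_{24}\cup P_{14}$ obtained by deleting the vertex-disjoint pair $\{P_{12},P_{34}\}$; adding the face equations for $C_{123}$ and $C_{124}$ yields $|D|=2(2\ell+1)-2|P_{12}|$. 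Since $|D|$ is even while $g(G)=2\ell+1$ is odd, necessarily $|D|\geq 2\ell+2$, forcing $|P_{12}|\leq \ell$; by the same argument applied to the other two vertex-disjoint pairs, every arris has length at most $\ell$.

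For (2), suppose some edge $e=xy$ of $G$ has $x,y\in V(H)$ but $e\notin E(H)$. If $x$ and $y$ lie on a common face $C$ of $H$, then $e$ is a chord of the hole $C$, which is impossible. Because the three arrises incident with any branch vertex collectively touch all four branch vertices, any branch vertex of $H$ lies on a common face with every other vertex of $H$, so the case just handled covers every configuration in which at least one of $x, y$ is a branch vertex. We may therefore assume $x\in V(P_{12}^*)$ and $y\in V(P_{34}^*)$ on a vertex-disjoint pair. I would then form the four cycles $\Gamma_{s,t}=P_{12}(u_s,x)\cup e\cup P_{34}(y,u_t)\cup P_{st}$ for $s\in\{1,2\}$, $t\in\{3,4\}$. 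Using the equal-length relations from (1), a direct telescoping gives $\sum_{s,t}|\Gamma_{s,t}|=2|P_{12}|+4\ell+6$; since each $\Gamma_{s,t}$ is a cycle of length at least $g(G)=2\ell+1$, the total is at least $8\ell+4$, which forces $|P_{12}|\geq 2\ell-1$ and contradicts $|P_{12}|\leq \ell$ for every $\ell\geq 2$.

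For (3), assume $v\in V(G)-V(H)$ has two neighbours $x,y\in V(H)$. If $x,y$ lie on a common face $C$, let $P$ and $Q$ be the two $(x,y)$-arcs of $C$; since $|P|+|Q|=2\ell+1$ is odd, the two cycles $xvy\cup P$ and $xvy\cup Q$ have opposite parities, so together their lengths are at least $(2\ell+1)+(2\ell+2)=4\ell+3$, while the actual total is $|C|+4=2\ell+5$, a contradiction for $\ell\geq 2$. Otherwise we reduce as in (2) to $x\in V(P_{12}^*)$, $y\in V(P_{34}^*)$, and I would run the four-cycle construction again with the length-$2$ path $xvy$ in place of $e$, obtaining cycles $\Delta_{s,t}$ with $\sum|\Delta_{s,t}|=2|P_{12}|+4\ell+10$. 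The decisive ingredient is a parity count: from the face equation $|P_{13}|+|P_{14}|+|P_{34}|=2\ell+1$ together with the equalities from (1), one checks that $|\Delta_{1,3}|$ and $|\Delta_{2,4}|$ share one parity, $|\Delta_{1,4}|$ and $|\Delta_{2,3}|$ share the opposite parity, so exactly two of the four cycles are odd and two are even. This improves the lower bound on the sum to $2(2\ell+1)+2(2\ell+2)=8\ell+6$, yielding $|P_{12}|\geq 2\ell-2$ and contradicting $|P_{12}|\leq \ell$ whenever $\ell\geq 3$.

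The main obstacle I foresee is the parity argument in part (3): the girth-only bound alone gives $|P_{12}|\geq 2\ell-3$, still compatible with $|P_{12}|\leq \ell$ in the borderline case $\ell=3$, so the hypothesis $\ell\geq 3$ is used precisely to make the parity boost of $+2$ decisive. A minor bookkeeping matter in both (2) and (3) is the reduction of branch-vertex subcases to the common-face case, which follows because the three arrises incident with any branch vertex collectively cover all four branch vertices.
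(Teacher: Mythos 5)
Your proposal is correct, and I verified the two computations it hinges on: in (2) the four cycles through the crossing edge sum to $2|P_{12}|+4\ell+6$, which against the girth bound $4(2\ell+1)$ forces $|P_{12}|\geq 2\ell-1>\ell$; in (3) the analogous sum is $2|P_{12}|+4\ell+10$, and your parity count (using $|P_{13}|=|P_{24}|$, $|P_{14}|=|P_{23}|$ and one face equation) does show exactly two of the four cycles are odd, giving $|P_{12}|\geq 2\ell-2>\ell$ precisely when $\ell\geq3$. The overall strategy is the same as the paper's (faces are odd holes of length exactly $2\ell+1$, reduce to the case of a connection between internal vertices of a vertex-disjoint pair of arrises, then count cycle lengths against the girth), but your execution of (2) and (3) is genuinely different in detail: the paper handles (2) with only two of the four cycles, deducing $|P_1|=|Q_1|=\ell$, hence $|L_1|=|L_2|=1$, and then a symmetry contradiction; and for (3) it uses two cycles of opposite parity to pin down $|P_1(u_1,x_1)|=\ell-1$ and $x_1u_2\in E(H)$, finishing via a length-$3$ chordal path and Lemma~\ref{easy case} (this is where its hypothesis $\ell\geq3$ enters). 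Your symmetric four-cycle summation replaces that case analysis and the appeal to Lemma~\ref{easy case} by a single arithmetic inequality, and it makes transparent why $\ell\geq3$ is needed in (3); the paper's route, at the cost of more bookkeeping, extracts the sharper structural information ($|P_1|=\ell$, $|L_i|=1$, the exact location of the attachment) along the way, which is in the spirit of the arguments it reuses later. Your reduction of the branch-vertex and same/adjacent-arris subcases to the common-face case is also sound, since every branch vertex, and every pair of arrises sharing a branch vertex, lies on a common face.
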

\begin{proof}
Without loss of generality we may assume that $H$ is pictured as the graph in Figure \ref{H}.
First, we prove that (1) is true. Assume that $|P_1|>|P_2|$.
Since $C_1$ and $C_4$ are odd holes, $|Q_1|<|Q_2|$.
Hence, $|P_2\cup Q_1\cup L_2|<|P_1\cup Q_2\cup L_2|$, which is a contradiction to the fact that $C_2$ and $C_3$ are both odd holes. So $|P_1|=|P_2|$. By symmetry each pair of vertex-disjoint arrises have the same length. Moreover, since $C_1\Delta C_2$ is an even cycle with length at least 
$2\ell+2$, we have $|P_1|\leq \ell$. By symmetry we have $|Q_1|, |L_1|\leq \ell$. So (1) holds.

Secondly, we prove that (2) is true. Suppose not. Since odd holes have no chord, by symmetry we may assume that there is an edge $st$ in $G$ with $s\in V(P_1^{\ast})$ and $t\in V(P_2^{\ast})$.
On  one hand, since $P_1(u_1,s)stP_2(t,u_4)Q_2$ and $P_1(u_2,s)stP_2(t,u_3)Q_1$ are cycles, by (1) we have
\[|P_1|+|P_2|+|Q_1|+|Q_2|+2=2(|P_1|+|Q_1|+1)\geq 2(2\ell+1).\] On the other hand, since $|P_1|, |Q_1|\leq \ell$ by (1), we have $|P_1|=|Q_1|=\ell$, implying that $|L_1|=|L_2|=1$. Moreover, by the symmetry between $L_1, L_2$ and $Q_1, Q_2$, we have $|Q_1|=|Q_2|=1$, which is a contradiction as $|Q_1|=\ell\geq2$. So (2) holds.

Finally, we prove that (3) is true. Suppose to the contrary that some vertex $x\in V(G)-V(H)$ has at least two neighbours in $V(H)$. Since a vertex not in an odd hole can not have two neighbours in the odd hole, $x$ has exactly two neighbours $x_1,x_2$ in $V(H)$. By symmetry we may further assume that $x_1\in V(P_1^{\ast})$ and $x_2\in V(P_2^{\ast})$.
Since $C'_1=P_1(u_1,x_1)x_1xx_2P_2(x_2,u_3)L_1$ and $C'_2=P_1(u_1,x_1)x_1xx_2P_2(x_2,u_4)Q_2$ are cycles whose lengths have different parity, $$|C'_1|+|C'_2|=2\ell+1+2(2+|P_1(u_1,x_1)|)\geq 4\ell+3.$$
Hence, $|P_1(u_1,x_1)|=\ell-1$ and $x_1u_2\in E(H)$ as $|P_1|\leq \ell$ by (1).
This implies that $u_2x_1xx_2$ is a chordal path of $C_3$ with length 3, which is a contradiction to Lemma \ref{easy case} as $\ell\geq3$.
\end{proof}

By Lemma \ref{odd k4} (1), all odd $K_4$-subdivisions of a graph $G\in \mathcal{G}_{\ell}$ have exactly $4\ell+2$ edges for  each number $\ell\geq 2$.

\section{Proof of Theorem \ref{main thm}}

Let $H_1,H_2$ be vertex disjoint induced subgraphs of a graph $G$.  An induced $(v_1,v_2)$-path $P$ is a {\em direct connection} linking $H_1$ and $H_2$ if $v_i$ is the only vertex in $V(P)$ having a neighbour in $V(H_i)$ for each $i\in \{1,2\}$. 
Evidently, $V(P)\cap V(H_1\cup H_2)=\emptyset$ and the set of internal vertices of each shortest path joining $H_1$ and $H_2$ induces a direct connection linking $H_1$ and $H_2$.

For convenience, Theorem \ref{main thm} is restated here in another way.

\begin{theorem}\label{exclude odd k4}
Let $\ell\geq5$ be an integer, and $G$ be a graph in $\mathcal{G}_{\ell}$. If $G$ is $4$-vertex-critical,  then $G$ has no odd $K_4$-subdivisions.
\end{theorem}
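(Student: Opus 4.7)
The plan is to argue by contradiction: suppose $G\in\mathcal{G}_\ell$ with $\ell\ge 5$ is 4-vertex-critical and contains an odd $K_4$-subdivision $H$, labelled as in Figure \ref{H}. By Lemma \ref{odd k4}, $H$ is induced, the three pairs of vertex-disjoint arrises have common lengths $p=|P_1|=|P_2|$, $q=|Q_1|=|Q_2|$, $l=|L_1|=|L_2|$, all at most $\ell$ with $p+q+l=2\ell+1$, and no vertex of $V(G)\setminus V(H)$ has two neighbours in $V(H)$.

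The first step is to record the structural consequences of 4-vertex-criticality. By Lemmas \ref{2-edge-cut} and \ref{P3}, $G$ is 3-edge-connected, has minimum degree at least $3$, and admits neither a $K_2$-cut nor a $P_3$-cut. Since each internal vertex of an arris has exactly two neighbours inside $H$, minimum degree $3$ forces each such vertex to have at least one further neighbour in $V(G)\setminus V(H)$, so $V(G)\setminus V(H)$ must be nonempty and attach richly to $V(H)$. Combining this with Lemmas \ref{same length} and \ref{2-vertex-cut}, no two- or three-vertex configuration inside $N[V(H)]$ dominating a segment of an arris of length between $4$ and $\ell$ can be a vertex cut, which severely limits how external components can hang off $H$.

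The second step is to analyse direct connections in $G-V(H)$. For any direct connection $R$ linking two anchor points $a,b\in V(H)$ sitting on arrises $A,B$ of $H$, combining $R$ with suitable subpaths of $H$ yields either a chordal path of some odd-hole face $C_i$ of $H$ (when $a,b$ lie on a common face) or an odd cycle obtained by closing $R$ through arris-segments (when $A$ and $B$ are vertex-disjoint). Apply Lemma \ref{easy case} in the first situation: the parity of $|R|$ matched against one of the two arcs of $C_i$ between $a$ and $b$ pins $|R|$ down to exactly that arc's length, with one case forcing $|R|\in[\ell+1,2\ell]$. In the second situation, $|R|$ is constrained directly by the requirement that every odd hole in $G$ have length exactly $2\ell+1$. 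Several such constraints, imposed by the various faces or cycles through which $R$ is visible, form a highly overdetermined linear system in $p$, $q$, $l$, and the anchor positions along the arrises.

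The final step is a case analysis on the admissible triples $(p,q,l)$ satisfying $p+q+l=2\ell+1$ and $1\le p,q,l\le\ell$, together with the admissible anchor positions. In each case, either the simultaneous length equalities from the previous step are inconsistent, or they pin the external structure down tightly enough to produce a $K_2$-cut or $P_3$-cut via Lemma \ref{2-vertex-cut}, contradicting Lemma \ref{P3}; alternatively, chaining two direct connections yields an odd cycle in $G$ of length less than $2\ell+1$, contradicting $g(G)=2\ell+1$, or an odd hole of length greater than $2\ell+1$, contradicting $G\in\mathcal{G}_\ell$. I expect the main obstacle to be exactly this case analysis: direct connections anchored at different pairs of arrises interact through shared faces in subtle ways, and the hypothesis $\ell\ge 5$ is invoked precisely because Lemmas \ref{easy case} and \ref{2-vertex-cut} lose force when the arrises and their shorter arcs are too close in length, so that the ``slack'' $\ell\ge 5$ is what makes the overdetermined system detectably inconsistent.
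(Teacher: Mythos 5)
Your proposal is an outline rather than a proof: the decisive steps are exactly the ones you defer (``I expect the main obstacle to be exactly this case analysis''), and the tools you list do not by themselves close the argument. In particular, the constraints coming from Lemma \ref{easy case} and from the requirement that all odd holes have length $2\ell+1$ do \emph{not} form an inconsistent system in $p,q,l$ and the anchor positions alone; configurations with an odd $K_4$-subdivision plus a direct connection are perfectly consistent length-wise, and the paper needs two further ideas that are absent from your plan. First, an extremal choice of $H$: among all odd $K_4$-subdivisions one minimizes the ``difference'' $d(H)=|P_1|-\min\{|Q_1|,|L_1|\}$, and the case in which the external path attaches to the longest arris pair is killed by exhibiting a \emph{new} odd $K_4$-subdivision (e.g.\ $G[V(C_1\cup C_2'\cup P_2)]$) of strictly smaller difference -- without this re-routing trick the attachment to $V(P_1)$ cannot be excluded. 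Second, the criticality hypothesis is used through a very specific construction: deleting the two edges $e,f$ of a longest arris $P_2$ at $u_3,u_4$ (no $2$-edge-cut, Lemma \ref{2-edge-cut}) to obtain a direct connection from $P_2^*$ to $H-V(P_2^*)$, then showing that any attachment other than at $u_3,u_4$ adjacent to $P_2^*$ would force $\{x,y,u_3\}$ to be a $P_3$-cut (Lemma \ref{P3}); your generic statement that ``no two- or three-vertex configuration \dots can be a vertex cut'' does not identify these cuts nor verify the hypotheses of Lemma \ref{2-vertex-cut}.

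Moreover, the endgame is missing: once all attachments are pushed to $u_3,u_4$, one must produce a minimal vertex cut $X$ with $\{u_3,u_4\}\subseteq X\subset N[\{u_3,u_4\}]$, and prove that \emph{every} induced $(u_3,u_4)$-path in the side $G_1$ containing $P_2^*$ has length exactly $|P_2|$ (this is where $\ell\geq5$ enters, via $|P_2|\geq4$ and a parity argument ruling out longer paths by building an odd hole of length at least $2\ell+3$), so that Lemma \ref{2-vertex-cut} yields a degree-$2$ vertex, $K_1$-cut or $K_2$-cut, contradicting criticality. Your sketch names the right lemmas but does not supply the extremal choice, the concrete cut constructions, or the path-length uniformity argument, and these are the substance of the proof rather than routine verifications.
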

\begin{proof}
Suppose not. Let $H$ be a subgraph of $G$ that is isomorphic to an odd $K_4$-subdivision  and pictured as the graph in  Figure \ref{H}.
By Lemma \ref{odd k4} (2), $H$ is an induced subgraph of $G$. 
By Lemma \ref{odd k4} (1), we have
$$|P_1|=|P_2|\leq \ell,\ \ |Q_1|=|Q_2|\leq \ell,\ \ \text{and}\ \ |L_1|=|L_2|\leq \ell. \eqno{(3.1)}$$ Without loss of generality we may assume that $P_1,P_2$ are longest arrises in $H$.

Let $e,f$ be the edges of $P_2$ incident with $u_3,u_4$, respectively. Since $G$ is 4-vertex-critical, $\{e,f\}$ is not an edge-cut of $G$ by Lemma \ref{2-edge-cut}, so there is a direct connection $P$ in $G\del\{e,f\}$ linking $P^*_2$ and $H-V(P^*_2)$. Let $v_1,v_2$ be the ends of $P$ with $v_2$ having a neighbour in $P^*_2$ and $v_1$ having a neighbour in $H-V(P^*_2)$. By Lemma \ref{odd k4} (3), both $v_1$ and $v_2$ have a unique neighbour in $V(H)$. Let $x,y$ be the neighbours of $v_1$ and $v_2$ in $V(H)$, respectively. That is, $x\in V(H)-V(P^*_2)$ and $y\in V(P^*_2)$.
Set $P':=xv_1Pv_2y$. Since $H$ is an induced subgraph of $G$, so is $H\cup P'$.

\begin{claim}\label{u1u2}
$x\notin \{u_1, u_2\}$.
\end{claim}
\begin{proof}[Subproof.]
Suppose not. By symmetry we may assume that $x=u_1$. Set $C'_4=L_1P'P_2(y,u_3)$.
Since $C_4$ is an odd hole, by symmetry we may assume that $C'_4$ is an even hole and $C_4\Delta C'_4$ is an odd hole. Since $P'P_2(y,u_3)$ is a chordal path of $C_1$, by (3.1) and Lemma \ref{easy case}, we have $|L_1|=1$. So $|P_1|=|Q_1|=\ell$ by (3.1) again. Since $P'P_2(y,u_4)$ is a chordal path of $C_2$ and $C_4\Delta C'_4$ is an odd hole, $|P'P_2(y,u_4)|=|P_1L_2|=\ell+1$ by (3.1) and Lemma \ref{easy case} again. Moreover, since $|P_2|=\ell$ and $|L_1|=1$, we have $|C'_4|\leq 2\ell$, which is not possible. So $x\neq u_1$.
\end{proof}

Set $d(H):=|P_1|-\mathrm{min}\{|Q_1|, |L_1|\}$. We say that $d(H)$ is the {\em difference} of $H$. 
Without loss of generality we may assume that among all odd $K_4$-subdivisions, $H$ is chosen with difference as small as possible.

\begin{claim}\label{location 2}
 $x\notin V(P_1)$.
\end{claim}
\begin{proof}[Subproof.]
Suppose to the contrary that $x\in V(P_1)$. Then $x\in V(P_1^*)$ by \ref{u1u2}. Without loss of generality we may assume that $|L_1|\geq|Q_1|$. Set $C'_2=Q_2P_1(u_1,x)P'P_2(y,u_4)$.
Since $C_4$ is an odd hole, either  $C'_2$ or $C_4\Delta C'_2$ is an odd hole. Suppose that $C_4\Delta C'_2$ is an odd hole. Since $C_1\cup C_3\cup P'$ is an odd $K_4$-subdivision, by Lemma \ref{odd k4} (1) and (3.1), we have $|P'|=|Q_1|$, $|P_1(u_1,x)|=|P_2(u_4,y)|$, and $|P_1(u_2,x)|=|P_2(u_3,y)|$. So $C'_2$ is an even hole of length $2(|Q_2|+|P_1(u_1,x)|)$ by (3.1) again, implying $|L_1|+|P_1(u_1,x)|\geq |Q_2|+|P_1(u_1,x)|\geq \ell+1$ as $|L_1|\geq|Q_1|$. Then $|C_4\Delta C'_2\Delta C_1|=2|P_1(x,u_2)Q_1|\leq 2\ell$, contrary to the fact $g(G)=2\ell+1$. So $C'_2$ is an odd hole.

Since $C_2\cup C'_2\cup C_3$ is an odd $K_4$-subdivision, it follows from Lemma \ref{odd k4} (1) and (3.1) that $$|P'|=|L_2|,\  |P_1(u_1,x)|=|P_2(u_3,y)|,\ \text{and}\ |P_1(u_2,x)|=|P_2(u_4,y)|. \eqno{(3.2)}$$
Then $|C_2\Delta C'_2\Delta C_1|=2|L_1|+2\ell+1$. Since $C_2\Delta C'_2\Delta C_1$ is not an odd hole,
$$1\in\{|Q_2|, |P_1(u_2,x)|,|P_2(u_3,y)|\}. \eqno{(3.3)}$$ When $|P_1(u_2,x)|=1$, since $|C_2\Delta C'_2\Delta C_1|=2|L_1|+2\ell+1$ and $g(G)=2\ell+1$, we have $|L_1|=|P'|=\ell$ by (3.2), implying $|P_1|=\ell$ and $|Q_1|=1$ as $P_1,P_2$ are longest arrises in $H$. Hence, $d(H)=\ell-1$. 
Then $G[V(C_1\cup C'_2 \cup P_2)]$ is an odd $K_4$-subdivision with difference $\ell-2$, which is a contradiction to the choice of $H$. So $|P_1(u_2,x)|\geq2$. Assume that $|Q_2|=1$. Then $|L_1|=|P_1|=\ell$ by (3.1). Since $|P_1(u_2,x)|\geq2$, the graph $G[V(C'_2\cup C_2\cup C_3)]$ is an odd $K_4$-subdivision whose difference is at most $\ell-2$, which is a contradiction to the choice of $H$ as $d(H)=\ell-1$. So $|Q_2|\geq2$. Then $yu_3\in E(H)$ by (3.3), implying $xu_1\in E(H)$ by (3.2).  Hence, $|C_4\Delta C'_2|=2+2|L_1|$ by (3.1) and (3.2), and so $|L_1|=\ell$ by (3.1) again. Since $|P_1|\geq |L_1|$, we have $|P_1|=\ell$ and $|Q_1|=1$ by (3.1), which is a contradiction as $|Q_2|\geq2$.
\end{proof}

\begin{claim}\label{location of x}
When $x\in \{u_3,u_4\}$, the vertices $x$ and $y$ are adjacent, that is, $xy\in\{e,f\}$.
\end{claim}
\begin{proof}[Subproof.]
By symmetry we may assume that $x=u_3$. Assume to the contrary that $x, y$ are not adjacent. Set $C'_3=P'P_2(y,u_3)$. Since $P'$ is a chordal path of $C_3$, we have that $C_3'$ is an odd hole by Lemma \ref{easy case} and (3.1). Since $C_3'\Delta C_3$ is an even hole, $|Q_1|=|L_2|=1$ by (3.1) and Lemma \ref{easy case} again. Then $|P_1|=2\ell-1>\ell$, which is a contradiction to (3.1). So $e=xy$.
\end{proof}

\begin{claim}\label{location of x-}
When $x\in V(L_1^*)$, we have that $|Q_1|=1$, $|P_1|=|L_1|=\ell$, $|P'|=2\ell-1$, $xu_3,yu_3\in E(H)$ and $xu_3yP'$ is an odd hole.
\end{claim}
\begin{proof}[Subproof.]
Set $C'_4=L_1(x,u_1)Q_2P_2(u_4,y)P'$. We claim that $C_4\Delta C_4'$ is an odd hole.
Assume to the contrary that $C_4\Delta C_4'$ is an even hole. Since $x\neq u_3$, the subgraph $C_1\cup(C_4\Delta C_4')$ is an induced theta subgraph. Hence, $xu_3\in E(H)$ by (3.1) and Lemma \ref{easy case}. Similarly, $yu_3\in E(H)$. Since $P'$ is a chordal path of $C_4$, we get a contradiction to Lemma \ref{easy case}. So the claim holds, implying that $C'_4$ is an even hole.

Since $x\neq u_3$, the graph $C_2\cup C'_4$ is an induced theta subgraph of $G$. Moreover, since $C'_4$ is an even hole, $|Q_2|=1$ by (3.1) and Lemma \ref{easy case}. Hence, $|P_1|=|L_1|=\ell$ by (3.1) again. Assume that $y, u_3$ are not adjacent. Since $C_1\cup C'_4$ is an induced theta subgraph of $G$, we have $xu_1\in E(H)$, implying $|P(x,u_3)|=\ell-1$. Since $P'$ is a chordal path of $C_4$ and $C_4\Delta C_4'$ is an odd hole, $yu_3\in E(H)$ by Lemma \ref{easy case}, a contradiction. Hence, $yu_3\in E(H)$. By symmetry we have $xu_3\in E(H)$. This proves \ref{location of x-}.
\end{proof}

\begin{claim}\label{location 3}
Assume that $P'$ has the structure stated as in \ref{location of x-}. Then no vertex in $V(G)-V(H\cup P')$ has two neighbours in $H\cup P'$.
\end{claim}
\begin{proof}[Subproof.]
Assume to the contrary that some vertex $a\in V(G)-V(H\cup P')$ has two neighbours $a_1,a_2$ in $H\cup P'$. Since no vertex has two neighbours in an odd hole, it follows from Lemma \ref{odd k4} (3) and \ref{location of x-} that $a$ has exactly two neighbours in $H\cup P'$ with $a_1\in V(H)-\{x,y,u_3\}$ and $a_2\in V(P)$. When $xa_2\notin E(P')$, let $Q$ be the unique $(y,a_1)$-path in $G[V(P)\cup\{a_1,y\}]$. Since $Q^*$ is a direct connection in $G\del\{e,f\}$ linking $P^*_2$ and $H-V(P^*_2)$, by \ref{location 2}-\ref{location of x-} and the symmetry between $P'$ and $Q$, we have $a_1\in\{x,u_3\}$, contrary to the fact $a_1\in V(H)-\{x,y,u_3\}$. So $xa_2\in E(P')$.
Moreover, since $|P_1|=|L_1|=\ell\geq5$ and $g(G)=2\ell+1$, we have $a_1\notin V(P_1)$. Let $u'_1$ be the neighbour of $u_1$ in $L_1$. When $a_1\in V(C_2)-V(P_1)$, since $aa_2$ is a direct connection in $G\del\{u_1u'_1,u_3x\}$ linking $L_1^*$ and $H-V(L_1^*)$, which is not possible by \ref{location of x-} and the symmetry between $P_2$ and $L_1$. So $a\in V(L_1^*)$. Then $xa_2aa_1$ is a chordal path of $C_1$ with length 3, contrary to Lemma \ref{easy case}.
\end{proof}

\begin{claim}\label{gap 1}
$x\in \{u_3,u_4\}$ and $xy\in\{e,f\}$.
\end{claim}
\begin{proof}[Subproof.]
By \ref{u1u2}-\ref{location of x}, it suffices to show that $x\notin V(L_1^*\cup L_2^*\cup Q_1^*\cup Q_2^*)$. Assume not. By symmetry  we may assume that $x\in V(L_1^*)$. By \ref{location of x-}, we have that $$xu_3\in E(L_1),\ e=yu_3,\ |P'|=2\ell-1,\ |P_1|=|L_1|=\ell,\ \text{and}\ |Q_1|=1.$$
Since no 4-vertex-critical graph has a $P_3$-cut by Lemma \ref{P3}, to prove that \ref{gap 1} is true, it suffice to show that $\{x,y,u_3\}$ is a $P_3$-cut of $G$.  Suppose not. Let $R$ be a shortest induced path in $G-\{x,y,u_3\}$ linking $P$ and $H-\{x,y,u_3\}$.  Let $s$ and $t$ be the ends of $R$ with $s\in V(P)$. By \ref{location 3}, $|R|\geq3$ and no vertex in $V(H\cup P')-\{x,y,u_3,s,t\}$ has a neighbour in $R^*$.

We claim that $t\notin V(L_1\cup P_2)-\{x,y,u_3\}$. Suppose not. By symmetry  we may assume that $t\in V(L_1)-\{x, u_3\}$. Let $R_1$ be the induced $(y,t)$-path in $G[V(P'\cup R)-\{x\}]$. When $u_3$ has no neighbour in $R_1^*$, set $R_2:=R_1$ and $C:=R_2L_1(t,u_3)u_3y$. When $u_3$ has a neighbour in $R_1^*$, let $t'\in V(R_1^*)$ be a neighbour of $u_3$ closest to $t$ and set $R_2:=u_3t'R_1(t',t)$ and $C:=R_2L_1(t,u_3)$. Note that $C_4\Delta C$ is a hole, although $C$ maybe not a hole.
Since $C\Delta C_1 \Delta C_2$ is an odd hole with length at least $2\ell+3$ when $C$ is an odd cycle, to prove the claim, it suffices to show that $|C|$ is odd. When $x$ has a neighbour in $R_2^*$, since $|R_2|\geq2\ell$ by (3.1) and the fact that $g(G)=2\ell+1$, the subgraph $C_4\Delta C$ is an even hole, which implies that $C$ is an odd cycle. So we may assume that $x$ has no neighbour in $R_2^*$. When $u_3$ is an end of $R_2$, since $R_2$ is a chordal path of $C_1$, it follows from Lemma \ref{easy case} and (3.1) that $C$ is an odd hole. When $y$ is an end of $R_2$ and $u_3yR_2$ is a chordal path of $C_1$, for the similar reason, $C$ is an odd hole. Hence, we may assume that $y$ is an end of $R_2$ and $u_3yR_2$ is not a chordal path of $C_1$, implying $xs\in E(P')$ and $s\in V(R_2)$. Since $P\subset R_2$, we have $|R_2|>2\ell$, so $C_4\Delta C$ is an even hole, implying that $C$ is an odd cycle. Hence, this proves the claim.

By symmetry we may therefore assume that $t\in V(P_1)-\{u_1\}$. Let $R_1$ be the induced $(y,t)$-path in $G[V(P'\cup R)-\{x\}]$. By \ref{u1u2} and \ref{location 2}, either $xs\in E(P')$ and $y$ has no neighbour in $R$ or some vertex in $\{x, u_3\}$ has a neighbour in $R_1^*$. No matter which case happens, we have $|R_1|\geq 2\ell$. That is, $R_1P_2(y,u_4)$ is a chordal path of $C_2$ with length at least $3\ell-1$, which is a contradiction to Lemma \ref{easy case} as $t, u_4$ are non-adjacent. Hence, $\{x,y,u_3\}$ is a $P_3$-cut of $G$.
\end{proof}

By \ref{gap 1}, there is a minimal vertex cut $X$ of $G$ with $\{u_3,u_4\}\subseteq X\subset N[\{u_3,u_4\}]$ and $\{u_3,u_4\}=X\cap V(H)$. Let $G_1$ be the induced subgraph of $G$ whose vertex set consists of $X$ and the vertex set of the component of $G-X$ containing $P_2^{\ast}$. Since $\ell\geq5$, we have $|P_2|\geq4$ by (3.1). If all induced $(u_3,u_4)$-paths in $G_1$ have length $|P_2|$, by Lemma \ref{2-vertex-cut}, $G$ has a degree-2 vertex, a $K_1$-cut or a $K_2$-cut, which is not possible as $G$ is 4-vertex-critical. Hence, to finish the proof of Theorem \ref{exclude odd k4}, it suffices to show that all induced $(u_3,u_4)$-paths in $G_1$ have length $|P_2|$.

Let $Q$ be an arbitrary induced $(u_3,u_4)$-path in $G_1$. When $|L_1|\geq2$, since $QQ_2$ is a chordal path of $C_1$ by Lemma \ref{odd k4} (3) and the definition of $G_1$, we have $|QQ_2|=|Q_1P_1|$ by Lemma \ref{easy case}, so $|Q|=|P_1|$ by (3.1). Hence, by (3.1) we may assume that $|L_1|=1$ and $|Q_1|=|P_1|=\ell$.
Since $Q_1L_2$ is an induced $(u_3,u_4)$-path of length $\ell+1$, either $|Q|=|P_2|=\ell$ or $|Q|\geq\ell+1$ and $|Q|$ has the same parity as $\ell+1$. Assume that the latter case happens. Without loss of generality we may further assume that $Q$ is chosen with length at least $\ell+1$ and with $|P_2\cup Q|$ as small as possible. Since $|Q|$ and $|P_2|$ have different parity, $P_2\cup Q$ is not bipartite. Moreover, by the choice of $Q$, the subgraph $P_2\cup Q$ contains a unique cycle $C$ and $|C|$ is odd. Since $Q=P_2\Delta C$ is an induced path of length at least $\ell+1$, we have $|C\cap Q|>|C\cap P_2|\geq2$. So $C\Delta C_3\Delta C_1$ is an odd hole of length at least $2\ell+3$, which is not possible.
\end{proof}

\section{Acknowledgments}
The authors thank the two referees for their careful reading of this manuscript and  pointing out an error in our original version. 


\end{document}